\documentclass[10pt,a4paper]{article}

\usepackage{epsf,epsfig,amsfonts,amsgen,amsmath,amstext,amsbsy,amsopn,amsthm,amssymb, epstopdf
}
\usepackage{ebezier,eepic}
\usepackage{color}
\usepackage{multirow}
\usepackage{url}
\usepackage{cite}

\newtheorem{theorem}{Theorem}[section]

\newtheorem{lemma}{Lemma}
\newtheorem{false statement}{False statement}

\theoremstyle{definition}

\begin{document}

\title{\bf\Large The bipartite analogue of a classical spanning tree enumeration formula, Boolean functions, and their applications to counting odd spanning trees}
\date{}

\author{Jun Ge\thanks{mathsgejun@163.com}, \ 
Yamin Yu\thanks{3038325763@qq.com} 
\\[2mm]
\small School of Mathematical Sciences, Sichuan Normal University, Chengdu 610066, Sichuan, China
}

\maketitle

\begin{abstract}
Recently, Zheng and Wu defined the concept of odd spanning tree of a graph, meaning a spanning tree in which every vertex has odd degree. 
Similar to Cayley's formula, Feng, Chen and Wu counted the number of odd spanning trees in complete graphs via Prüfer code and the exponential generating function. In this note, we give a simple proof via a classical spanning tree enumeration formula and the Boolean function. 
We also generalize it to complete bipartite graphs.

\medskip

\noindent {\bf Keywords:} odd spanning tree; Boolean function; complete graph; complete bipartite graph 

\end{abstract}

\section{Introduction}

Let $G=(V(G), E(G))$ be a graph without self-loops and $\mathcal{T}(G)$ be the collection of spanning trees of $G$.
We usually call $\tau(G)=|\mathcal{T}(G)|$ the number of spanning trees of $G$. Denote by $K_n$, $K_{m,n}$, and $K_{n_1,n_2,\ldots, n_s}$ 
be the complete graph of $n$ vertices, the complete bipartite graph that one partite has $m$ vertices and the other partite has $n$ vertices, 
and the complete $s$-partite graph that each partite has $n_i$ vertices.

Counting spanning trees is a classic problem originating in the 19th century that continues to be a hot topic today. 
Some recent developments can be found in \cite{DG,DKM,GJ,LCY,LY,Yan,YT} for example. 

In 2025, motivated by Gallai's classical theorem (\cite{Lovasz}, Section 5, Problem 17) that the vertices of 
any graph can be partitioned into two sets 
where one induces an even-degree subgraph and the other induces an odd-degree subgraph, and also the concept of 
homeomorphically irreducible spanning trees (HISTs, spanning trees containing no vertices of degree two), 
Zheng and Wu \cite{ZW} introduced the concept of odd spanning trees. An odd spanning tree of a graph $G$ 
is a spanning tree $T$ of $G$ where each vertex has odd degree in $T$.

The famous Cayley's formula \cite{Cayley} states that $\tau(K_n)=n^{n-2}$. 
Let $\tau_o(G)$ be the number of odd spanning trees of $G$. 
Recently, Feng, Chen and Wu counted the number of odd spanning trees in complete graphs via Prüfer code and the exponential generating function as follows. 

\begin{theorem}[\cite{FCW}]\label{Kn}
Let $n$ be a positive number. Then
$$\tau_{o}(K_{n})=\frac{1}{2^{n}}\sum_{k=0}^{n}\binom{n}{k}(2k-n)^{n-2}.$$
\end{theorem}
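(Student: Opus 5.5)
The plan is to use the classical degree-sequence enumeration formula for spanning trees of $K_n$: for positive integers $d_1,\dots,d_n$ with $\sum_i d_i = 2n-2$, the number of labelled trees on $[n]$ in which vertex $i$ has degree $d_i$ is the multinomial coefficient
$$\binom{n-2}{d_1-1,\,d_2-1,\,\dots,\,d_n-1}.$$
Summing this over all degree sequences with weights gives the polynomial identity
$$\sum_{T\in\mathcal{T}(K_n)}\prod_{i=1}^n x_i^{\deg_T(i)} = x_1x_2\cdots x_n\,(x_1+\cdots+x_n)^{n-2},$$
which follows from the multinomial theorem. The task is then to extract, from this generating polynomial, only the trees in which every exponent $\deg_T(i)$ is odd.

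For the extraction I would use the Boolean (parity) filter. For a single variable, $\tfrac12\bigl(f(1)-f(-1)\bigr)$ keeps exactly the odd-degree part of $f$. Doing this in every coordinate means averaging over all $x\in\{1,-1\}^n$ with the sign factor $\prod_i x_i$. This gives
$$\tau_o(K_n)=\frac{1}{2^n}\sum_{x\in\{\pm1\}^n}\Bigl(\prod_{i} x_i\Bigr)\,F(x),\qquad F(x)=\prod_i x_i\,\Bigl(\sum_i x_i\Bigr)^{n-2}.$$
The key simplification is that $\bigl(\prod_i x_i\bigr)^2=1$ on $\{\pm1\}^n$, so the summand reduces to $(x_1+\cdots+x_n)^{n-2}$. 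If exactly $k$ coordinates equal $+1$, then $\sum_i x_i = k-(n-k) = 2k-n$, and there are $\binom nk$ such vectors. Grouping the vectors by $k$ yields the formula in Theorem~\ref{Kn}.

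Two points need care.

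\begin{itemize}
\item \textbf{Validity of the filter.} I must justify that the parity filter counts each tree exactly once when all its degrees are odd, and otherwise contributes zero. This holds because the identity $\tfrac12\bigl(t^d-(-t)^d\bigr)=t^d\,[d\text{ odd}]$, evaluated at $t=1$, factors coordinatewise over the monomials of the generating polynomial.
\item \textbf{Small cases.} For $n=1$ the exponent $n-2$ is negative, and $n=2$ needs the convention $0^0=1$. I expect the $n\ge 2$ conventions to be the only subtle point. I would handle $n=1$ (and possibly $n=2$) by direct check or note the convention, since the polynomial identity above is stated for $n\ge2$.
\end{itemize}

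The main obstacle is really just quoting the degree-sequence formula correctly. The rest is a one-line computation.
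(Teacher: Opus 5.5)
Your proposal is correct and takes essentially the same route as the paper. Both arguments use the degree-sequence formula (Theorem~\ref{Kn-degree}), average over $\{\pm1\}^n$ to keep only the all-odd degree sequences, and then group sign vectors by the number $k$ of $+1$'s. Your sign weight $\prod_i x_i$, which squares to $1$ on $\{\pm1\}^n$, is just the paper's shift $k_i=d_i-1$ together with its even-exponent filter (Lemma~\ref{bo}) in a different form. Your remarks on $n=1$ and on the convention $0^0=1$ for $n=2$ cover points the paper leaves implicit.
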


Note: in \cite{FCW}, the original theorem is of the form
\[
\tau_{o}(K_{n})=
\begin{cases}
0, & \text{if } n \text{ is odd;} \\
\displaystyle\frac{1}{2^{n}}\sum_{k=0}^{n}\binom{n}{k}(2k-n)^{n-2}, & \text{if } n \text{ is even.}
\end{cases}
\]
We slightly revised the theorem since when $n$ is odd, it is easy to check that $\frac{1}{2^{n}}\sum_{k=0}^{n}\binom{n}{k}(2k-n)^{n-2}=0$.

In this note, we we give a simple proof via a classical spanning tree enumeration formula and the Boolean function. 
We also generalize it to complete bipartite graphs.

\section{Preliminaries}

In this section, we give a simple lemma on a specific Boolean function, which will be used in counting odd spanning trees in section \ref{appl}.

Let $y_1,y_2,\dots,y_n$ be Boolean variables(also called sign variables) with $y_i\in\{-1,1\}$ for each $i\in\{1,2,\dots,n\}$, and all variables are independent of one another. Denote the set of all variable combinatuons by $\{\pm1\}^n$ which contains $2^n$ elements in total. Let $a_1,a_2,\dots,a_n$ be real constants and $m\in{\mathbb{N}^\ast}$. Define the function $f: \{\pm1\}^n \to \mathbb{R}$ as the $m$-th power of the linear combination of variables:
	$f(y_1,y_2,\dots,y_n)=(\sum_{i=1}^{n}a_iy_i)^m$, and let $y=(y_1, y_2, \ldots, y_n)$, then the expectation of $f$ is 
$$
	\mathbb{E}[f] = \frac{1}{2^n} \sum_{\substack{y \in \{\pm1\}^n}} \left( \sum_{i=1}^n a_i y_i \right)^m.
$$

In this paper, the expectation operator $\mathbb{E}[\cdot]$ acts on the $m$-th degree homogeneous polynomial $f$ with respect to the random variable $y$. The random variable $y$ satisfies the moment property of origin-symmetric distribution, i.e., for any positive integer $k$, its odd-order moments satisfy $\mathbb{E}[y^k]=0$ and its even-order moments are positive constants. The simplification of expectations relies on the linearity of expectation and the product expectation property of independent random variables.

\begin{lemma}\label{bo}
$$
		\sum_{\substack{y \in \{\pm1\}^n}} \left(\sum_{i=1}^n a_i y_i \right)^m=
			2^n\cdot\sum\limits_{\substack{k_1+k_2+\cdots+k_n=m\\k_i\in 2\mathbb{N} }}\frac{m!}{k_1!k_2!\cdots k_n!}a_1^{k_1}a_2^{k_2}\cdots a_n^{k_n}.
$$	
\end{lemma}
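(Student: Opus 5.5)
The plan is to expand the $m$-th power by the multinomial theorem, swap the order of summation, and then evaluate the resulting sum over $y$ one coordinate at a time, which it splits into.

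\emph{Expansion and interchange.} For each fixed $y\in\{\pm1\}^n$ the multinomial theorem gives
$$\Bigl(\sum_{i=1}^n a_iy_i\Bigr)^m=\sum_{\substack{k_1+\cdots+k_n=m\\ k_i\in\mathbb{N}}}\frac{m!}{k_1!\cdots k_n!}\,a_1^{k_1}\cdots a_n^{k_n}\,y_1^{k_1}\cdots y_n^{k_n}.$$
Summing over all $y$ and interchanging the two finite sums, the coefficient of each multinomial term becomes $\sum_{y\in\{\pm1\}^n}y_1^{k_1}\cdots y_n^{k_n}$.

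\emph{Factorization.} The summation set is the product $\{\pm1\}\times\cdots\times\{\pm1\}$ and the summand is a product of functions of distinct coordinates. Hence
$$\sum_{y\in\{\pm1\}^n}\prod_{i=1}^n y_i^{k_i}=\prod_{i=1}^n\Bigl(\sum_{y_i\in\{\pm1\}}y_i^{k_i}\Bigr)=\prod_{i=1}^n\bigl(1+(-1)^{k_i}\bigr).$$
In the paper's probabilistic language this is $2^n\,\mathbb{E}[\prod_i y_i^{k_i}]=2^n\prod_i\mathbb{E}[y_i^{k_i}]$, using independence.

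\emph{Parity.} Each factor $1+(-1)^{k_i}$ equals $2$ if $k_i$ is even, including $k_i=0$, and equals $0$ if $k_i$ is odd. So the product is $2^n$ when every $k_i$ lies in $2\mathbb{N}$ and is $0$ otherwise. Substituting back, only the multi-indices with all $k_i$ even survive, each multiplied by $2^n$, which is exactly the right-hand side of the lemma.

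Every step here is routine, so there is no real obstacle. The only points needing a little care are the product factorization of the sum over the cube and the convention that $0\in 2\mathbb{N}$, with $y_i^0=1$. If $m$ is odd, no multi-index with all $k_i$ even can sum to $m$, so both sides vanish, consistent with the odd-$n$ remark after Theorem~\ref{Kn}.
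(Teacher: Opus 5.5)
Your proof is correct and follows the paper's route: multinomial expansion, interchange of the finite sums, factorization of $\sum_{y}\prod_i y_i^{k_i}$ into single-coordinate sums, and the parity evaluation $\sum_{y_i\in\{\pm1\}}y_i^{k_i}\in\{0,2\}$. The paper phrases the factorization as $2^n\,\mathbb{E}[\cdot]$ plus independence. Your direct product formula $\prod_i(1+(-1)^{k_i})$ is slightly cleaner, and it avoids the paper's misprint of $a_1a_2\cdots a_n$ for $a_1^{k_1}\cdots a_n^{k_n}$ in the intermediate line.
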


\begin{proof}
	For simplicity, let $\mathcal{Y}=\sum_{\substack{y \in \{\pm1\}^n}} \left( \sum_{i=1}^n a_i y_i \right)^m$. Then $\mathcal{Y}=2^n\mathbb{E}[(\sum_{i=i}^{n}a_iy_i)^m]$. From the expansion of multivariate polynomial, it can be derived that $$\left(\sum_{i=1}^{n}a_iy_i\right)^m=\sum_{\substack{k_1+k_2+\cdots+k_n=m\\k_i\geq0 }}\frac{m!}{k_1!k_2!\cdots k_n!}(a_1y_1)^{k_1}(a_2y_2)^{k_2}\cdots(a_ny_n)^{k_n},$$
	Based on the fundamental properties of expectation, we obtain that
	\begin{align*}
		\mathcal{Y} &=2^n\cdot\mathbb{E}[\sum_{\substack{k_1+k_2+\cdots+k_n=m\\k_i\geq0 }}\frac{m!}{k_1!k_2!\cdots k_n!}(a_1y_1)^{k_1}(a_2y_2)^{k_2}\cdots(a_ny_n)^{k_n}]\\
		&=2^n\cdot\sum_{\substack{k_1+k_2+\cdots+k_n=m\\k_i\geq0 }}\frac{m!}{k_1!k_2!\cdots k_n!}\mathbb{E}[(a_1y_1)^{k_1}]\mathbb{E}[(a_2y_2)^{k_2}]\cdots\mathbb{E}[(a_ny_n)^{k_n}]\\
		&=2^n\cdot\sum_{\substack{k_1+k_2+\cdots+k_n=m\\k_i\geq0 }}\frac{m!}{k_1!k_2!\cdots k_n!}a_1a_2\cdots a_n\mathbb{E}[y_1^{k_1}]\mathbb{E}[y_2^{k_2}]\cdots\mathbb{E}[y_n^{k_n}],
	\end{align*}	
	Since $y_i$ is a Boolean variable, $\mathbb{E}[y_i^{k_i}]=1$ if $k_i$ is even and $\mathbb{E}[y_i^{k_i}]=0$ if $k_i$ is odd. Thus, 
$$
		\sum_{\substack{y \in \{\pm1\}^n}} \left(\sum_{i=1}^n a_i y_i \right)^m=
			2^n\cdot\sum\limits_{\substack{k_1+k_2+\cdots+k_n=m\\k_i\in 2\mathbb{N} }}\frac{m!}{k_1!k_2!\cdots k_n!}a_1^{k_1}a_2^{k_2}\cdots a_n^{k_n}.
$$	
\end{proof}

\section{Bipartite analogue of a classical spanning tree
enumeration formula}

The following theorem is well-known and can be proved by induction on the number of vertices. 

\begin{theorem}[\cite{Berge,Lovasz}]\label{Kn-degree}
Let $d_1,d_2,\dots,d_n$ be positive integers summing up to $2n-2$. Then the number of spanning trees of $K_n$ in which the vertex $i$ has degree exactly $d_i$ for all $i=1,2,\dots,n$ equals$$
	\dfrac{(n-2)!}{(d_1-1)!(d_2-1)!\cdots(d_n-1)!}.$$
\end{theorem}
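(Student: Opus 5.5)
We prove Theorem~\ref{Kn-degree} by induction on $n$, as the remark before the statement suggests. The base case is $n=2$. Here the condition $d_1+d_2=2$ forces $d_1=d_2=1$. There is exactly one spanning tree, and the formula gives $0!/(0!\,0!)=1$.

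For the inductive step, fix $n\ge 3$ and positive integers $d_1,\dots,d_n$ with $\sum d_i=2n-2$. Write $N(d_1,\dots,d_n)$ for the number of spanning trees of $K_n$ with $\deg(i)=d_i$ for all $i$. Since $\sum d_i=2n-2<2n$, some $d_j$ equals $1$. By symmetry of $K_n$ under relabeling, and because the claimed formula is symmetric in the $d_i$, we may assume $d_n=1$.

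In any tree counted by $N$, vertex $n$ is a leaf attached to a unique neighbor $i\in\{1,\dots,n-1\}$, and this forces $d_i\ge 2$. Deleting $n$ gives a bijection onto spanning trees of $K_{n-1}$ with degree sequence $(d_1,\dots,d_i-1,\dots,d_{n-1})$. The new sequence consists of positive integers summing to $2n-2-2=2(n-1)-2$. The inverse map reattaches $n$ to $i$. Hence
\[
N(d_1,\dots,d_n)=\sum_{i:\,d_i\ge 2} N(d_1,\dots,d_i-1,\dots,d_{n-1}).
\]

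We now apply the induction hypothesis to each summand. The $i$-th term becomes
\[
\frac{(n-3)!}{(d_1-1)!\cdots(d_i-2)!\cdots(d_{n-1}-1)!}=\frac{(n-3)!\,(d_i-1)}{\prod_{k=1}^{n-1}(d_k-1)!}.
\]
This expression is also valid, with value $0$, when $d_i=1$. So we may sum over all $i\le n-1$. Using $\sum_{i=1}^{n-1}(d_i-1)=(2n-2-1)-(n-1)=n-2$, the total is
\[
\frac{(n-2)!}{\prod_{k=1}^{n-1}(d_k-1)!}.
\]
Since $(d_n-1)!=0!=1$, this is the claimed formula.

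The only delicate points are the existence of a leaf and the validity of the deletion bijection, in particular that the reduced sequence is still positive and sums correctly. Both are handled by the counting argument above. Alternatively, one could use Prüfer codes, since vertex $i$ appears exactly $d_i-1$ times in the code, which gives the multinomial coefficient directly. The induction is the intended route here.
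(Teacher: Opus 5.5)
Your proof is correct and is essentially the argument the paper has in mind. The paper gives no proof of its own here; it only remarks that the result follows by induction on the number of vertices, and it carries out exactly this leaf-deletion induction, with $\sum_j (b_j-1)=m-1$ playing the role of your $\sum_i (d_i-1)=n-2$, for the bipartite analogue in Theorem~\ref{Kmn-degree}. Your explicit observation that the $d_i=1$ summands vanish is a small point of care that the paper's bipartite version glosses over.
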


We now prove the bipartite analogue of this lemma. This result will be applied in Section 4 to count odd spanning trees in complete bipartite graphs, and is also of independent interest in its own right.

\begin{theorem}\label{Kmn-degree}
Let $K_{m,n}$ be the complete bipartite graph with bipartition $A=\{u_1,u_2,\ldots,u_m\}$ and $B=\{v_1,v_2,\ldots,v_n\}$. Let $a_i$ and $b_j$ be positive integers which $\sum_{i=1}^{m}a_i=\sum_{j=1}^{n}b_j=m+n-1$. Then the number of spanning trees of $K_{m,n}$ in which the vertex $u_i$ and $v_j$ have degree exactly $a_i$ and $b_j$ for $i\in\{1,2,\ldots,m\}$ and $j\in\{1,2,\ldots,n\}$ equals 
    $$
	\frac{(m-1)!(n-1)!}{\prod_{i=1}^{m}(a_i-1)!\prod_{j=1}^{n}(b_j-1)!},
	$$
\end{theorem}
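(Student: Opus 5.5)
We will prove Theorem~\ref{Kmn-degree} by induction on $m+n$, mirroring the usual inductive proof of Theorem~\ref{Kn-degree}. Denote by $N(a_1,\dots,a_m;b_1,\dots,b_n)$ the number of spanning trees of $K_{m,n}$ with the prescribed degrees. Note first that the degree hypothesis is consistent: every tree edge has one end in $A$ and one in $B$, so both degree sums equal the number of edges $m+n-1$. For the base case $m=1$, the only spanning tree is the star centred at $u_1$. It forces $a_1=n$ and $b_j=1$ for all $j$, and the formula gives $\frac{0!(n-1)!}{(n-1)!\cdot 1}=1$. The case $n=1$ is symmetric.

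For the inductive step, assume $m,n\ge 2$. Then $\sum_j b_j=m+n-1<n+n(m-1)$ only guarantees a leaf somewhere, so we argue directly. Since $\sum_i(a_i-1)=n-1$ and $\sum_j(b_j-1)=m-1$, and a tree on $m+n\ge 4$ vertices has at least two leaves, we may pick a leaf. The key point is to choose the side cleverly: if $\sum_j(b_j-1)=m-1<n$, then some $b_j=1$. Indeed, if every $b_j\ge 2$ then $m-1\ge n$, and symmetrically if every $a_i\ge2$ then $n-1\ge m$. These two inequalities cannot hold simultaneously, so some vertex has prescribed degree $1$. By the symmetry of the formula, we may assume that $b_n=1$.

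Now count the trees by the neighbour $u_i$ of the leaf $v_n$. Deleting $v_n$ gives a bijection between the trees in question with $v_n$ attached to $u_i$ and the spanning trees of $K_{m,n-1}$ with degrees $(a_1,\dots,a_i-1,\dots,a_m;b_1,\dots,b_{n-1})$. Here only the $u_i$ with $a_i\ge2$ contribute: if $a_i=1$, then $u_i$ together with $v_n$ would form a component, which is impossible for $m+n\ge3$. The new degree sums are both $m+n-2$, as required. By induction, the contribution of $u_i$ is
\[
\frac{(m-1)!(n-2)!}{\prod_{k}(a_k-1)!\prod_{j<n}(b_j-1)!}\,(a_i-1).
\]
This expression vanishes automatically when $a_i=1$, so we may sum over all $i$. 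Using $\sum_i(a_i-1)=n-1$ and $(b_n-1)!=1$, the sum equals $\frac{(m-1)!(n-1)!}{\prod(a_i-1)!\prod(b_j-1)!}$, which is exactly the claimed formula.

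The main obstacle is ensuring that a degree-one vertex exists on a side whose removal keeps both parts nonempty. The counting argument above settles this, and the requirement $m,n\ge2$ guarantees $n-1\ge1$ after deleting $v_n$. The remaining steps are routine bookkeeping. One must also check the edge case in which the reduced instance reaches $n-1=1$; this is covered by the base case.
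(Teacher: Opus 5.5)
Your proof is correct and is essentially the paper's argument: induction on $m+n$, deleting a vertex of prescribed degree $1$ (you take it in $B$, the paper takes it in $A$), summing the inductive count over its possible neighbour, and using $\sum_i(a_i-1)=n-1$. You also explicitly handle neighbours of prescribed degree $1$, where the reduced degree would be $0$ and the induction hypothesis does not apply. The paper passes over this case silently, so your version is slightly more careful.
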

\begin{proof}
	We use induction on $m+n$. For $m=1$ or $n=1$, $K_{m,n}$ is a star and the statement is trivial. 
Since $\sum_{i=1}^{m}a_i=\sum_{j=1}^{n}b_j=m+n-1$,  $\sum_{i=1}^{m}a_i+\sum_{j=1}^{n}b_j=2(m+n)-2<2(m+n)$, 
there must exist a leaf, and we may assume that $a_m=1$ where $m>1$. 
Remove $u_m$, in any tree under consideration, $u_m$ is adjacent to some $v_j$, $1\leq j\leq n$ and the removal of $u_m$ results in another tree on $\{u_1,\dots,u_{m-1},v_1,\dots,v_n\}$ with degrees $a_1,\dots,a_{m-1},b_1,\cdots,b_j-1,\dots,b_n$. 
By the induction hypothesis, the number of trees in $K_{m-1,n}$ that $\{u_1,\dots,u_{m-1},v_1,\dots,v_n\}$ has degrees $a_1,\dots,a_{m-1},b_1,\cdots,b_j-1,\dots,b_n$ is
	$$\frac{(m-2)!(n-1)!}{(a_1-1)!\cdots(a_{m-1}-1)!(b_1-1)!\cdots(b_j-2)!\cdots(b_n-1)!}
		=\frac{(m-2)!(n-1)!(b_j-1)}{\prod_{i=1}^{m}(a_i-1)!\prod_{j=1}^{n}(b_j-1)!}.
	$$	
	Thus, the number of trees on $\{u_1,\dots,u_{m},v_1,\dots,v_n\}$ with degrees $a_1,\dots,a_{m},b_1,\dots,b_n$ is

\begin{eqnarray*}
\  &  & \sum_{j=1}^{n}\frac{(m-2)!(n-1)!(b_j-1)}{\prod_{i=1}^{m}(a_i-1)!\prod_{j=1}^{n}(b_j-1)!} \\
& = & \frac{(m-2)!(n-1)!}{\prod_{i=1}^{m}(a_i-1)!\prod_{j=1}^{n}(b_j-1)!}\sum_{j=1}^{n}(b_j-1) \\
& = & \frac{(m-1)!(n-1)!}{\prod_{i=1}^{m}(a_i-1)!\prod_{j=1}^{n}(b_j-1)!}.
\end{eqnarray*}
\end{proof}

Now we give a new proof on the number of complete bipartite graphs.

\begin{theorem}\cite{FS}
$$\tau(K_{m,n})=m^{n-1}n^{m-1}.$$
\end{theorem}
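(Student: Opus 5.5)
We sum the count from Theorem \ref{Kmn-degree} over all admissible degree sequences and evaluate the resulting sums with the multinomial theorem. Every spanning tree of $K_{m,n}$ has some degree sequence $(a_1,\dots,a_m;b_1,\dots,b_n)$ of positive integers. Each edge of such a tree has exactly one endpoint in $A$ and one in $B$, so $\sum_i a_i=\sum_j b_j=m+n-1$. Hence
$$\tau(K_{m,n})=\sum_{\substack{a_1+\cdots+a_m=m+n-1\\ a_i\ge 1}}\ \sum_{\substack{b_1+\cdots+b_n=m+n-1\\ b_j\ge 1}}\frac{(m-1)!(n-1)!}{\prod_{i=1}^{m}(a_i-1)!\prod_{j=1}^{n}(b_j-1)!}.$$

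The summand factors into a part depending only on the $a_i$ and a part depending only on the $b_j$. Moreover, the two constraints are independent of each other. So the double sum splits as a product:
$$\left(\sum_{a}\frac{(m-1)!}{\prod_i (a_i-1)!}\right)\left(\sum_{b}\frac{(n-1)!}{\prod_j (b_j-1)!}\right).$$

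Substitute $k_i=a_i-1\ge 0$, so that $\sum k_i=n-1$. The first factor becomes
$$\frac{(m-1)!}{(n-1)!}\sum_{k_1+\cdots+k_m=n-1}\frac{(n-1)!}{k_1!\cdots k_m!}=\frac{(m-1)!}{(n-1)!}\,m^{n-1},$$
by the multinomial theorem applied to $(1+\cdots+1)^{n-1}$ with $m$ ones. Symmetrically, substituting $l_j=b_j-1$ with $\sum l_j=m-1$, the second factor equals $\frac{(n-1)!}{(m-1)!}\,n^{m-1}$. Multiplying the two factors, the factorial prefactors cancel and we obtain $m^{n-1}n^{m-1}$.

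No step is a serious obstacle. The only point needing care is to justify that summing over these degree sequences counts each spanning tree exactly once. This holds because each tree has a unique degree sequence, and that sequence necessarily satisfies the two sum constraints. One must also balance the factorials when rescaling each factor into a multinomial sum.
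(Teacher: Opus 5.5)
Your proof is correct and follows the paper's route: sum the count from Theorem~\ref{Kmn-degree} over all admissible degree sequences, factor the double sum into an $a$-part and a $b$-part, and evaluate each with the multinomial theorem. The only differences are cosmetic. The paper pairs $(n-1)!$ with the $a$-sum and $(m-1)!$ with the $b$-sum from the start, so it never needs your rescaling by $\frac{(m-1)!}{(n-1)!}$ and $\frac{(n-1)!}{(m-1)!}$. You also state explicitly that each tree is counted exactly once, which the paper takes for granted.
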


\begin{proof}
The notation and symbols used below are consistent with those in Theorem \ref{Kmn-degree}. 
Let $a_i'=a_i-1$ and $b_j'=b_j-1$ for $i\in\{1,2,\dots,m\}$ and $j\in\{1,2,\dots,n\}$. 
Then $\sum_{i=1}^{m}a_i'=n-1$, $\sum_{j=1}^{n}b_j'=m-1$. The number of spanning trees of $K_{m,n}$ is
  
\begin{eqnarray*}
\ \tau(K_{m,n}) & = & \sum_{\substack{a_1'+\cdots+a_m'=n-1\\b_1'+\cdots+b_n'=m-1}}\frac{(m-1)!(n-1)!}{\prod_{i=1}^{m}(a_i-1)!\prod_{j=1}^{n}(b_j-1)!} \\
& = & \sum_{\substack{a_1'+\cdots+a_m'=n-1}}\frac{(n-1)!}{\prod_{i=1}^{m}(a_i-1)!}
      \cdot\sum_{\substack{b_1'+\cdots+b_n'=m-1}}\frac{(m-1)!}{\prod_{j=1}^{m}(b_i-1)!} \\
& = & (\underbrace{1+1+\cdots+1}_{m})^{n-1}(\underbrace{1+1+\cdots+1}_{n})^{m-1} \\
& = & m^{n-1}n^{m-1}.
\end{eqnarray*}

  \end{proof}

\section{Applications to counting odd spanning trees}\label{appl}

We first give a simple proof of Theorem \ref{Kn} via Theorem \ref{Kn-degree} and the Boolean function.

\noindent
{\bf Proof of Theorem \ref{Kn}.}
By Theorem \ref{Kn-degree}, the number of odd spanning trees of $K_n$ in which 
the vertex $i$ has degree exactly $d_i=2d_i'+1$ for all $i=1,2,\dots,n$ equals$$
	\dfrac{(n-2)!}{(2d_1')!(2d_2')!\cdots(2d_n')!},$$
	where $d_i\geq0$, and $\sum_{i=1}^{n}(2d_i')=n-2$.
Therefore, 
$$\tau_o(K_n)=\sum_{\substack{k_1+k_2+\cdots+k_n=n-2\\k_i\in 2\mathbb{N} }}\frac{(n-2)!}{k_1!k_2!\cdots k_n!}.$$
It follows from Lemma \ref{bo} that
$$
	\sum_{\substack{\varepsilon_i \in \{\pm1\}}}(\varepsilon_1x_1+\varepsilon_2x_2+\cdots\varepsilon_nx_n)^{n-2}=	2^n\cdot\sum_{\substack{k_1+k_2+\cdots+k_n=n-2\\k_i\in 2\mathbb{N} }}\frac{(n-2)!}{k_1!k_2!\cdots k_n!}x_1^{k_1}x_2^{k_2}\cdots x_n^{k_n}.
	$$
Let $x_1=x_2=\cdots=x_n=1$, we obtain that
       $$
		\tau_o(K_n)=\frac{1}{2^n}	\sum_{\substack{\varepsilon_i \in \{\pm1\}}}(\varepsilon_1+\varepsilon_2+\cdots\varepsilon_n)^{n-2}\\
		=\frac{1}{2^n}\sum_{k=0}^{n}\binom{n}{k}(2k-n)^{n-2}.
	  $$
{\hfill$\Box$}

Now we consider the number of odd spanning trees in complete bipartite graphs. 

\begin{theorem}\label{Kmn}
Let $m$ and $n$ be positive numbers. Then
$$\tau_o(K_{m,n})=\frac{1}{2^{m+n}}\left[\sum_{i=0}^{m}\binom{m}{i}(2i-m)^{n-1}\right]\left[\sum_{j=0}^{n}\binom{n}{j}(2j-n)^{m-1}\right].$$	
\end{theorem}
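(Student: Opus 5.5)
We mirror the proof of Theorem \ref{Kn}, replacing Theorem \ref{Kn-degree} by its bipartite analogue, Theorem \ref{Kmn-degree}. An odd spanning tree of $K_{m,n}$ is determined by a degree sequence $a_1,\dots,a_m,b_1,\dots,b_n$ with every $a_i,b_j$ odd. Theorem \ref{Kmn-degree} applies to any positive $a_i,b_j$ with $\sum a_i=\sum b_j=m+n-1$; these are exactly the degree sequences of spanning trees of $K_{m,n}$, since every edge has one end in each part. Write $a_i-1=k_i$ and $b_j-1=l_j$, where each $k_i,l_j\in 2\mathbb{N}$, $\sum_i k_i=n-1$ and $\sum_j l_j=m-1$. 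Then
\[
\tau_o(K_{m,n})=\sum_{\substack{k_1+\cdots+k_m=n-1\\ k_i\in 2\mathbb{N}}}\ \sum_{\substack{l_1+\cdots+l_n=m-1\\ l_j\in 2\mathbb{N}}}\frac{(m-1)!\,(n-1)!}{k_1!\cdots k_m!\,l_1!\cdots l_n!}.
\]
The constraints on the $k$'s and on the $l$'s are independent, so this double sum factors into the product of
\[
S_1=\sum_{\substack{k_1+\cdots+k_m=n-1\\ k_i\in 2\mathbb{N}}}\frac{(n-1)!}{k_1!\cdots k_m!}
\qquad\text{and}\qquad
S_2=\sum_{\substack{l_1+\cdots+l_n=m-1\\ l_j\in 2\mathbb{N}}}\frac{(m-1)!}{l_1!\cdots l_n!},
\]
exactly as in the proof of $\tau(K_{m,n})=m^{n-1}n^{m-1}$ above.

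Next, we evaluate each factor with Lemma \ref{bo}, setting all $a_i=1$. For $S_1$, apply the lemma with $m$ variables and exponent $n-1$:
\[
S_1=\frac{1}{2^m}\sum_{\varepsilon\in\{\pm1\}^m}(\varepsilon_1+\cdots+\varepsilon_m)^{n-1}.
\]
Group the sign vectors by the number $i$ of entries equal to $+1$. There are $\binom{m}{i}$ such vectors, and each gives $\sum_t\varepsilon_t=2i-m$, so
\[
S_1=\frac{1}{2^m}\sum_{i=0}^m\binom{m}{i}(2i-m)^{n-1}.
\]
Symmetrically, applying the lemma with $n$ variables and exponent $m-1$ gives
\[
S_2=\frac{1}{2^n}\sum_{j=0}^n\binom{n}{j}(2j-n)^{m-1}.
\]
Multiplying $S_1$ and $S_2$ yields the claimed formula with the prefactor $2^{-(m+n)}$.

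The main care is needed in the degenerate cases. When $n=1$ (or $m=1$) the exponent $n-1$ (or $m-1$) is $0$, but Lemma \ref{bo} was stated for $m\in\mathbb{N}^\ast$. We handle exponent $0$ directly: the sum over $\{\pm1\}^m$ of the constant $1$ is $2^m$. This must be paired with the convention $0^0=1$ in the term $i=m/2$ when $m$ is even. With that convention the formula gives $\tau_o(K_{m,1})=1$ if $m$ is odd and $0$ otherwise, which matches the star directly: the centre has degree $m$ and the leaves have degree $1$.

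The parity vanishing needs no extra argument, since it is built into the lemma. If $n-1$ is odd but $m$ is even, there is no decomposition of $n-1$ into $m$ even parts, and correspondingly the sum $\sum_i\binom{m}{i}(2i-m)^{n-1}$ vanishes by the symmetry $i\mapsto m-i$. Everything else is routine bookkeeping of the substitution $k_i=a_i-1$.
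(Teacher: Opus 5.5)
Your proposal is correct and follows the paper's proof. Like the paper, you apply Theorem \ref{Kmn-degree} with $k_i=a_i-1$, $l_j=b_j-1$, factor the double sum, evaluate each factor by Lemma \ref{bo} with all coefficients equal to $1$, and group sign vectors by their number of $+1$ entries. Two small remarks: your separate treatment of the exponent-zero cases $m=1$ or $n=1$ (with $0^0=1$) covers a point the paper skips, since Lemma \ref{bo} is stated only for positive exponents; and in your parity remark, the vanishing of $\sum_i\binom{m}{i}(2i-m)^{n-1}$ for odd $n-1$ holds for every $m$, not only for even $m$.
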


\begin{proof}
	The notation and symbols used below are consistent with those in Theorem \ref{Kmn-degree}. 
By Theorem \ref{Kmn-degree}, the number of odd spanning trees of $K_{m,n}$ 
in which the vertex $u_i\in A$ and $v_j\in B$ have degree exactly $d(u_i)=2d'(u_i)+1$ and $d(v_j)=2d'(v_j)+1$ 
for all $i\in\{1,2,\dots,m\}$ and $j\in\{1,2,\dots,n\}$ equals
$$
	\frac{(m-1)!(n-1)!}{\prod_{i=1}^{m}2d'(u_i)!\prod_{j=1}^{n}2d'(v_j)!},
	$$
	where all $d'\geq0$, $\sum_{i=1}^{m}2d'(u_i)=n-1$ and $\sum_{j=1}^{n}2d'(v_j)=m-1$. 
Therefore, 
$$\tau_o(K_{m,n})=\sum_{\substack{k_1+k_2+\cdots+k_m=n-1\\k_i\in 2\mathbb{N} }}
                  \sum_{\substack{l_1+l_2+\cdots+l_n=m-1\\l_i\in 2\mathbb{N} }}\frac{(m-1)!(n-1)!}{\prod_{i=1}^{m}k_i!\prod_{j=1}^{n}l_j!}.$$

It follows from Lemma \ref{bo} that
\begin{eqnarray*}
\ &  & \sum_{\substack{\alpha_i \in \{\pm1\}}}(\alpha_1x_1+\cdots+\alpha_mx_m)^{n-1}
        \cdot\sum_{\substack{\beta_i \in \{\pm1\}}}(\beta_1y_1+\cdot+\beta_ny_n)^{m-1} \\
& = & 2^m\sum_{\substack{k_1+\cdots+k_m=n-1\\k_i\in 2\mathbb{N}}}\frac{(n-1)!}{k_1!\cdots k_m!}x_1^{k_1}\cdots x_m^{k_m}
      \cdot2^n\sum_{\substack{l_1+\cdots+l_n=m-1\\l_j\in 2\mathbb{N}}}\frac{(m-1)!}{l_1!\cdots l_m!}y_1^{l_1}\cdots y_m^{l_m} \\
& = & 2^{m+n}\left[\sum_{\substack{k_1+\cdots+k_m=n-1\\k_i\in 2\mathbb{N}}}\frac{(n-1)!}{k_1!\cdots k_m!}x_1^{k_1}\cdots x_m^{k_m}\right]
      \left[\sum_{\substack{l_1+\cdots+l_n=m-1\\l_j\in 2\mathbb{N}}}\frac{(m-1)!}{l_1!\cdots l_m!}y_1^{l_1}\cdots y_m^{l_m}\right]
\end{eqnarray*}

Set $x_1=\cdots=x_m=y_1=\cdots=y_n=1$, we can obtain that
	\begin{align*}
		\tau_o(K_{m,n})&=\dfrac{1}{2^{m+n}}\sum_{\substack{\alpha_i \in \{\pm1\}}}(\alpha_1+\cdots+\alpha_m)^{n-1}\cdot\sum_{\substack{\beta_i \in \{\pm1\}}}(\beta_1+\cdot+\beta_n)^{m-1}\\
		&=\dfrac{1}{2^{m+n}}\sum_{i=1}^{m}\binom{m}{i}(2i-m)^{n-1}\sum_{j=1}^{n}\binom{n}{j}(2j-n)^{m-1}.
	\end{align*}
	This complete the proof.
\end{proof}

Note that if at least one of $m$ and $n$ is even, then $\tau_o(K_{m,n})=0$. This can be easily deduced by the handshaking lemma.

\section*{Acknowledgements}
This research is supported by the National Natural Science Foundation of China (No. 12371355).

\section*{Declarations}
{\bf Conflicts of interest} ~There are no conflicts of interests or competing interests.


\begin{thebibliography}{10}
\bibitem{Berge}
C. Berge,
Graphs and hypergraphs,
North-Holland Math. Library, Vol. 6,
North-Holland Publishing Co., Amsterdam-London; American Elsevier Publishing Co., Inc., New York, 1973.

\bibitem{Cayley}
A. Cayley,
A theorem on trees,
\emph{Quart. J. Pure Appl. Math.} {\bf 23} (1889), 376--378.

\bibitem{DG}
F.M. Dong, J. Ge,
Counting spanning trees in a complete bipartite graph which contain a given spanning forest,
\emph{J. Graph Theory} {\bf 101}(1) (2022), 79--94.

\bibitem{DKM}
A.M. Duval, C.J. Klivans, J.L. Martin,
Simplicial matrix-tree theorem.
\emph{Trans. Amer. Math. Soc.} {\bf 361} (2009),
6073--6114.

\bibitem{FCW}
Y.D. Feng, Y.W. Chen, B. Wu, 
The number of odd spanning trees in the complete graphs, 
\emph{Discrete Appl. Math.} {\bf 383} (2026), 295--299.

\bibitem{FS} 
M. Fiedler, J. Sedl\'a\v{c}ek, 
\"{U}ber Wurzelbasen von gerichteten Graphen, 
\emph{\v{C}asopis P\v{e}st. Mat.} {\bf 83} (1958), 214--225.

\bibitem{GJ}
H.L. Gong, X.A. Jin,
A simple formula for the number of spanning trees of line graphs,
\emph{J. Graph Theory} {\bf 88} (2018), 294--301.

\bibitem{LCY}
D.Y. Li, W.X. Chen, W.G. Yan, 
Enumeration of spanning trees of complete multipartite graphs containing a fixed spanning forest,
\emph{J. Graph Theory} {\bf 104} (2023), 160--170.

\bibitem{LY}
D.Y. Li, W.G. Yan, 
A variant of the Teufl‐Wagner formula and applications,
\emph{J. Graph Theory} {\bf 109} (2025), 68--75.

\bibitem{Lovasz}
L. Lov\'{a}sz,
Combinatorial problems and exercises,
North-Holland Publishing Co., Amsterdam, 1979.

\bibitem{Yan}
W.G. Yan,
On the number of spanning trees of some irregular line graphs,
\emph{J. Combin. Theory Ser. A} {\bf 120} (2013), 1642--1648.

\bibitem{YT}
C. Yang, T. Tian,  
Counting spanning trees of multiple complete split-like graph containing a given spanning forest, 
\emph{Discrete Math.} {\bf 348(2)} (2025) 114300.


\bibitem{ZW}
J. Zheng, B. Wu,  
Odd spanning trees of a graph, (2025) arXiv 2503.17676.









\end{thebibliography}
\end{document}